\documentclass[11pt]{article}
\usepackage{amsmath}
\usepackage{amsthm, amssymb}
\usepackage{amsfonts}
\usepackage{tcolorbox}
\usepackage{graphicx}
\usepackage{physics}
\usepackage{dsfont}
\usepackage{xcolor}
\usepackage{appendix}

\usepackage{authblk}

\usepackage[a4paper]{geometry}
\geometry{verbose,tmargin=2.5cm,bmargin=2.5cm,lmargin=3cm,rmargin=3cm}

\title{A note on Puder's generalised co-growth formula for trees}
\author[]{Wenbo Li and Joe Thomas}

\date{}
\begin{document}
\maketitle

\newtheorem{theorem}{Theorem}[section]
\newtheorem{proposition}{Proposition}[section]
\newtheorem{corollary}{Corollary}[section]
\newtheorem{lemma}{Lemma}[section]
\newtheorem{definition}{Definition}[section]
\newtheorem{remark}{Remark}[section]
\newtheorem{eg}{Example}[section]

\begin{abstract}
    In this note, we prove a conjecture of Puder on an extension of the co-growth formula to any non-negative function defined on a bi-regular tree. A key component of our proof is the establishment of a resolvent identity, which serves as an operator version of the co-growth formula. We also provide a simpler proof of Puder's generalised co-growth formula for the regular tree.
\end{abstract}

\section{Introduction}

Let $G=(V,E)$ be a graph and a fix vertex $v\in V$. For any non-zero $f:V\to\mathbb{R}_{\geq 0}$ and $r\in\mathbb{N}$, set
    \begin{align*}
        a_r(f) &= \sum_{\substack{p : \text{ a non-backtracking walk} \\ \text{from $v$ of length $r$}}} f(\mathrm{end}(p)),\\
        b_r(f)&= \sum_{\substack{p : \text{ a walk from $v$} \\ \text{of length $r$}}} f(\mathrm{end}(p)),
    \end{align*}    
where $\mathrm{end}(p)$ is the final vertex of the walk $p$. The exponential growth rates of $a_r$ and $b_r$ are denoted by

$$\alpha(f)=\limsup_{r\to\infty}a_r(f)^{\frac{1}{r}}\ \ \   \text{and} \ \ \ \beta(f)=\limsup_{r\to\infty}b_r(f)^{\frac{1}{r}}.$$

Note that although $a_r(f)$ and $b_r(f)$ depend on the particular choice of vertex $v \in V$, $a(f)$ and $b(f)$ do not, see Appendix \ref{Appendix}.

The classical co-growth formula of Grigorchuk \cite{Grigor1977} and Cohen \cite{Cohen1982}, with an extension by Northshield \cite{Northshield1992} (see \cite[Section 1]{Puder2024} for more extensive references), establishes a fundamental relationship between $\alpha(f)$ and $\beta(f)$ when $G$ is a $d$-regular graph and $f$ is the indicator function on a vertex of $G$ (in which case $\alpha(f)$ is called the \textit{co-growth} of $G$). Equivalently, by lifting to the infinite $d$-regular tree, the co-growth formula relates $\alpha(f)$ and $\beta(f)$ where $f$ is the indicator function on the fibre over the vertex (this is a special case of Theorem \ref{thm:co-growth}). In this case, $\beta(f)$ ($\alpha(f)$) is the exponential growth rate of the number of (non-backtracking) walks starting and ending at $v$ in $G$.

Recently, Puder \cite{Puder2024} has generalised the co-growth formula to indicator functions of arbitrary subsets of (bi)-regular trees which has subsequently been applied in the breakthrough work of Chen, Garza-Vargas, Tropp and van Handel \cite{CVTvH2024} to understand the `staircase behaviour' of outliers in the adjacency matrix spectrum of random regular graphs. Moreover, for the regular tree, Puder proves a stronger version of the co-growth formula valid for all non-negative functions.

\begin{theorem}[{\cite[Theorems 2.1 and 1.7]{Puder2024}}]
\label{thm:co-growth}
    Let $f\neq 0$ be a non-negative function on a $d$-regular tree with $d\geq 3$. Then,
    \begin{equation*}
        \beta(f)=
\begin{cases}
  2\sqrt{d-1}& \text{if }\alpha(f)\leq \sqrt{d-1},\\
  \alpha(f)+\frac{d-1}{\alpha(f)}& \text{if } \alpha(f)>\sqrt{d-1}.
\end{cases}
    \end{equation*}
Moreover, if $f=\mathds{1}_S$ for $\emptyset\neq S\subseteq V(G)$ and $G$ is a $(k,l)$-bi-regular tree, then 
    $$\begin{aligned}
        \beta(f) = \begin{cases}
            \sqrt{k-1}+\sqrt{l-1} & \text{if } \alpha(f)\leq (k-1)^\frac{1}{4}(l-1)^\frac{1}{4},\\
        \left(\alpha(f)+\frac{k-1}{\alpha(f)}\right)^{1/2}\left(\alpha(f)+\frac{l-1}{\alpha(f)}\right)^{1/2} & \text{if } \alpha(f)\geq (k-1)^\frac{1}{4}(l-1)^\frac{1}{4}.
        \end{cases}
    \end{aligned}$$
\end{theorem}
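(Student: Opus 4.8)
The plan is to translate the statement into a question about radii of convergence of generating functions. Put $A_f(z)=\sum_{r\ge 0}a_r(f)z^r$ and $B_f(z)=\sum_{r\ge 0}b_r(f)z^r$. These have non-negative coefficients, so by Cauchy--Hadamard their radii of convergence are $1/\alpha(f)$ and $1/\beta(f)$ respectively, and it suffices to locate the radius of convergence of $B_f$ in terms of that of $A_f$. Two elementary facts drive the argument. First, a non-backtracking walk in a tree never revisits a vertex, hence is a geodesic, so $a_r(f)=\sum_{d(v,w)=r}f(w)$ and $A_f(z)=\sum_{w}f(w)z^{d(v,w)}$. Second, writing $A$ for the adjacency operator, $b_r(f)=\langle\delta_v,A^rf\rangle$, so $B_f(z)=\langle\delta_v,(I-zA)^{-1}f\rangle=\sum_{w}f(w)\,(I-zA)^{-1}_{vw}$, again a sum of non-negative terms.

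The core of the proof is a resolvent identity giving $(I-zA)^{-1}_{vw}$ in closed form. Decomposing a walk from $v$ to $w$ according to its last visit to each vertex of the geodesic $[v,w]$, one obtains, for $|z|$ small,
\[(I-zA)^{-1}_{vw}=G(z)\,\phi(z)^{d(v,w)}\,c(z,w),\]
where $G(z)=(I-zA)^{-1}_{vv}$; in the $d$-regular case $\phi(z)=zW(z)$ with $W(z)$ the generating function counting, within a subtree hanging off a vertex, the walks from the subtree's root to itself, which satisfies $(d-1)z^2W^2-W+1=0$, and $c\equiv 1$; in the $(k,l)$-bi-regular case $\phi(z)=z\sqrt{W_k(z)W_l(z)}$ with $W_k,W_l$ solving the analogous quadratic system, and $c(z,w)$ is $1$ or $\sqrt{W_l(z)/W_k(z)}$ according to the parity of $d(v,w)$ --- in particular $c$ is bounded above and below by positive constants on the interval of interest. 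Summing against $f$ and regrouping by $d(v,w)$ yields the operator version of the co-growth formula: $B_f(z)=G(z)A_f(\phi(z))$ up to bounded positive factors (an exact identity when $k=l$). Establishing this identity carefully is the step I expect to be the main obstacle; in particular one must check that $G$ and $W$ (resp.\ $W_k,W_l$) all have the same radius of convergence $\rho$, equal to the reciprocal of the spectral radius of the tree --- namely $\frac{1}{2\sqrt{d-1}}$, resp.\ $\frac{1}{\sqrt{k-1}+\sqrt{l-1}}$ --- and that $G$ has no pole before $\rho$, which is exactly where the hypothesis $d\ge 3$ (resp.\ the non-degeneracy of $k,l$) enters.

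Granting the identity, the theorem follows from a monotonicity argument. On $[0,\rho]$ the function $\phi$ is continuous and strictly increasing with $\phi(0)=0$ and $\phi(\rho)$ equal to $\frac{1}{\sqrt{d-1}}$, resp.\ $\frac{1}{(k-1)^{1/4}(l-1)^{1/4}}$, which are precisely the thresholds in the statement. If $\alpha(f)$ is at most the threshold, i.e.\ $1/\alpha(f)\ge\phi(\rho)$, then $\phi(z)<1/\alpha(f)$ for every $z\in[0,\rho)$, so $A_f(\phi(z))$ and hence $B_f(z)$ are finite there; thus $\beta(f)\le 1/\rho$. Conversely, picking $w_0$ with $f(w_0)>0$ gives $b_r(f)\ge f(w_0)(A^r)_{vw_0}$, and since $\limsup_r\big((A^r)_{vw_0}\big)^{1/r}$ equals the spectral radius $1/\rho$ (classical, or read off from the singularity of $(I-zA)^{-1}_{vw_0}$ at $\rho$), we obtain $\beta(f)\ge 1/\rho$ and hence equality --- the ``low'' value in the theorem. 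If instead $\alpha(f)$ exceeds the threshold, let $z^\ast\in(0,\rho)$ be the unique point with $\phi(z^\ast)=1/\alpha(f)$. For $z<z^\ast$, $\phi(z)<1/\alpha(f)$ forces $B_f(z)<\infty$; for $z^\ast<z<\rho$, $\phi(z)>1/\alpha(f)$ forces $A_f(\phi(z))=+\infty$ by non-negativity of the coefficients, hence $B_f(z)=+\infty$. Therefore $\beta(f)=1/z^\ast$.

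Finally, to recover the stated formula one substitutes $z^\ast=1/\beta(f)$ and $\phi(z^\ast)=1/\alpha(f)$ into the quadratic satisfied by $W$ (resp.\ by the product $W_kW_l$) and simplifies: in the $d$-regular case this yields $\beta(f)=\alpha(f)+\frac{d-1}{\alpha(f)}$, and in the $(k,l)$-bi-regular case $\beta(f)^2=\big(\alpha(f)+\frac{k-1}{\alpha(f)}\big)\big(\alpha(f)+\frac{l-1}{\alpha(f)}\big)$, matching the two threshold regimes (the two expressions agree at the threshold, so the placement of the boundary case is immaterial). The argument nowhere uses that $f$ is an indicator, so it simultaneously establishes the conjectured extension to all non-negative $f$ on bi-regular trees; the degenerate possibility $\alpha(f)=\infty$, which forces $z^\ast=0$ and $\beta(f)=\infty$, is consistent with the formulas.
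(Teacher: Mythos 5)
Your argument is correct and, at its heart, coincides with the paper's: both proofs rest on the fact that on the tree the walk generating function $\sum_r z^r (A^r)_{vw}$ factors as a $w$-independent prefactor times $\phi(z)^{d(v,w)}$, where $1/z=1/\phi+(d-1)\phi$ (resp. $1/z^2=(1/\phi+(k-1)\phi)(1/\phi+(l-1)\phi)$ in the bi-regular case), after which everything reduces to comparing radii of convergence of power series with non-negative coefficients, plus the separate lower bound $\beta(f)\ge\|A\|$. Where you genuinely differ is in how this identity is obtained and packaged. The paper works at the operator level: it derives $\sum_r z^{-r-1}A^r=(\rho-\rho^{-1})^{-1}\sum_r\rho^{-r}A_r$ from the recurrence $A_rA=A_{r+1}+A_{r-1}(D-I)$ (and, for the bi-regular case, from a ``bi-resolvent'' $(Z-A)^{-1}$ with $Z=z_1I_U+z_2I_W$, whose $Z^{-1}$ factor plays exactly the role of your parity correction $c(z,w)$), controls convergence of $\sum_r\rho^{-r}A_r$ in operator norm via the Hashimoto matrix, and then pairs with truncations $f_m$ using monotone convergence. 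You instead work entry-wise, deriving the product formula for the Green function from the last-visit decomposition and the first-return equations $(d-1)z^2W^2-W+1=0$ (resp. the coupled system for $W_k,W_l$), and sum against $f$ by Tonelli; your route is closer to the Lyons--Peres proof that the paper cites as its inspiration. The trade-off is that the operator formulation gets absolute convergence and the bi-regular bookkeeping for free, whereas your route requires carrying out the branch-point analysis of $W_k,W_l$ by hand --- which does check out: the system gives $W_k=1+(k-1)\phi^2$, $W_l=1+(l-1)\phi^2$, the common singularity sits at $\rho=(\sqrt{k-1}+\sqrt{l-1})^{-1}$, and $\phi(\rho)=(k-1)^{-1/4}(l-1)^{-1/4}$, matching the threshold. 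One small correction: the finiteness of $G(z)$ for $z<\rho$ is automatic from self-adjointness of $A$ (since $\rho=1/\|A\|$), so that is not where the hypothesis $d\ge 3$ enters; otherwise the argument is complete once the (standard) product formula for the resolvent entries is verified.
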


Puder further conjectured \cite[Conjecture 3.1]{Puder2024} that the generalized co-growth formula, that is, replacing $\mathds{1}_S$ with any non-negative function, still persists for bi-regular trees. In this article, we confirm that this is indeed the case, proving the following result.

\begin{theorem}\label{thm:co-growth2}
Let $f\neq 0$ be a non-negative function on a $(k,l)$-bi-regular tree. Then,
    $$\begin{aligned}
        \beta(f) = \begin{cases}
            \sqrt{k-1}+\sqrt{l-1} & \text{if } \alpha(f)\leq (k-1)^\frac{1}{4}(l-1)^\frac{1}{4},\\
        \left(\alpha(f)+\frac{k-1}{\alpha(f)}\right)^{1/2}\left(\alpha(f)+\frac{l-1}{\alpha(f)}\right)^{1/2} & \text{if } \alpha(f)\geq (k-1)^\frac{1}{4}(l-1)^\frac{1}{4}.
        \end{cases}
    \end{aligned}$$
\end{theorem}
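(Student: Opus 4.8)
The plan is to pass to generating functions and reduce the theorem to locating the first singularity on the positive real axis of a resolvent of the tree. Fix a base vertex $v$, which we may take to have degree $k$. The starting point is that non-backtracking walks on a tree are rigid: once such a walk leaves $v$ it can never return, so a non-backtracking walk of length $r$ from $v$ is exactly a geodesic from $v$ to a vertex at distance $r$, whence $a_r(f)=\sum_{d(v,w)=r}f(w)$. Setting $A_f(t)=\sum_{r\ge 0}a_r(f)\,t^r$ and $B_f(z)=\sum_{r\ge 0}b_r(f)\,z^r$, by Cauchy--Hadamard the radii of convergence of $A_f$ and $B_f$ are $\alpha(f)^{-1}$ and $\beta(f)^{-1}$, so everything reduces to computing the radius of convergence $\rho_B$ of $B_f$. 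Writing $\mathcal{A}$ for the adjacency operator one has $b_r(f)=(\mathcal{A}^r f)(v)$, so $B_f(z)$ is the $(\delta_v,f)$ entry of the resolvent $(I-z\mathcal{A})^{-1}$.

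The technical core is a \emph{resolvent identity} playing the role of the co-growth formula at the operator level. Let $\mathcal{S}_n$ be the sphere operators $(\mathcal{S}_n g)(x)=\sum_{d(x,y)=n}g(y)$, so $a_n(f)=(\mathcal{S}_n f)(v)$, and let $F_k(z),F_l(z)$ be the first-passage generating functions from a degree-$k$, resp.\ degree-$l$, vertex to a prescribed neighbour; homogeneity of the bi-regular tree gives the algebraic system $F_k=z(1+(k-1)F_kF_l)$, $F_l=z(1+(l-1)F_kF_l)$, whose relevant branch is analytic on $|z|<(\sqrt{k-1}+\sqrt{l-1})^{-1}$ with a square-root branch point at the endpoint. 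Write $P:=F_kF_l$ and let $G_0(z)=(1-kzF_l(z))^{-1}$ be the return generating function at $v$. Using the first-entrance decomposition of walks on a tree, $G(x,y\,|\,z)=(\prod_i F(x_{i-1},x_i\,|\,z))\,G(y,y\,|\,z)$ along the geodesic $x=x_0,\dots,x_n=y$, together with the telescoping of the first-passage product on a bi-regular tree, I would establish
\begin{equation*}
 B_f(z)=G_0(z)\Bigl(A^{\mathrm{ev}}\bigl(P(z)\bigr)+F_l(z)\,A^{\mathrm{od}}\bigl(P(z)\bigr)\Bigr),
\end{equation*}
where $A^{\mathrm{ev}}(t)=\sum_{j\ge0}a_{2j}(f)\,t^j$ and $A^{\mathrm{od}}(t)=\sum_{j\ge0}a_{2j+1}(f)\,t^j$. (When $k=l=d$ this collapses to $(I-z\mathcal{A})^{-1}=\frac{1+(d-1)u^2}{1-u^2}\sum_n u^n\mathcal{S}_n$ with $z=u/(1+(d-1)u^2)$ and $u=F(z)$, which is the promised short proof of the regular case.)

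The rest is complex analysis with non-negative coefficients. On $I:=(0,(\sqrt{k-1}+\sqrt{l-1})^{-1})$ the function $P$ is real-analytic, strictly increasing with $P'>0$, running from $0$ to $((k-1)(l-1))^{-1/2}$, hence a biholomorphism onto its image; $F_l,G_0$ are analytic and non-vanishing on $I$, and $G_0$ has a singularity at the right endpoint of $I$ (its coefficients are the closed-walk counts, whose exponential growth rate is the spectral radius $\sqrt{k-1}+\sqrt{l-1}$, so Pringsheim applies). Put $\alpha_{\mathrm{ev}}(f)=\limsup_j a_{2j}(f)^{1/(2j)}$ and $\alpha_{\mathrm{od}}(f)=\limsup_j a_{2j+1}(f)^{1/(2j+1)}$, so that $\alpha(f)=\max\{\alpha_{\mathrm{ev}}(f),\alpha_{\mathrm{od}}(f)\}$ and $A^{\mathrm{ev}},A^{\mathrm{od}}$ have radii $\alpha_{\mathrm{ev}}(f)^{-2},\alpha_{\mathrm{od}}(f)^{-2}$. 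Both $G_0\cdot(A^{\mathrm{ev}}\!\circ P)$ and $F_lG_0\cdot(A^{\mathrm{od}}\!\circ P)$ have non-negative Taylor coefficients, so $\rho_B$ equals the minimum of their radii; using Pringsheim for $A^{\mathrm{ev}},A^{\mathrm{od}}$ and that $P$ is a local biholomorphism away from its branch point, each of those radii is $\min\{(\sqrt{k-1}+\sqrt{l-1})^{-1},\,P^{-1}(\alpha_{\bullet}(f)^{-2})\}$, and one concludes
\begin{equation*}
 \rho_B=\min\bigl\{(\sqrt{k-1}+\sqrt{l-1})^{-1},\,P^{-1}(\alpha(f)^{-2})\bigr\}
\end{equation*}
(with $P^{-1}$ extended to the closed interval, the second entry read as the endpoint when $\alpha(f)^{-2}\ge((k-1)(l-1))^{-1/2}$). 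Inverting $P$ via $z^2=P/\bigl((1+(k-1)P)(1+(l-1)P)\bigr)$ at $P=\alpha(f)^{-2}$ gives $(1+(k-1)P)(1+(l-1)P)/P=(\alpha(f)+\tfrac{k-1}{\alpha(f)})(\alpha(f)+\tfrac{l-1}{\alpha(f)})$, so $\beta(f)=\rho_B^{-1}$ equals $\sqrt{k-1}+\sqrt{l-1}$ exactly when $\alpha(f)\le((k-1)(l-1))^{1/4}$ and the stated product otherwise. The matching lower bound $\beta(f)\ge\sqrt{k-1}+\sqrt{l-1}$ needed below the threshold follows from $\limsup_r W_r(n)^{1/r}=\sqrt{k-1}+\sqrt{l-1}$ for the number $W_r(n)$ of length-$r$ walks to a fixed vertex at distance $n$ (again the spectral radius of the tree), applied to a vertex $w$ with $f(w)>0$; and the degenerate cases $\alpha(f)\in\{0,\infty\}$ are checked directly.

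Two steps are delicate; the rest is bookkeeping. First, the resolvent identity itself: one must verify convergence of the operator series and track the two vertex types of the bi-regular tree carefully --- the sphere-operator recursion $\mathcal{A}\mathcal{S}_n=\mathcal{S}_{n+1}+(\mathcal{D}-I)\mathcal{S}_{n-1}$ (with $\mathcal{D}$ the degree operator) is precisely what forces the even/odd splitting and makes $P=F_kF_l$ the natural variable. Second, the assertion that $B_f$ genuinely fails to be analytic at the asserted radius --- not merely stays finite there, which a power series with non-negative coefficients can do; this is handled by combining Pringsheim's theorem, the fact that $P$ is biholomorphic away from its branch point, and the elementary observation that the radius of convergence of a sum of two non-negatively-supported power series is the minimum of the two. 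I expect the resolvent identity to be where the real work lies, consistent with it being the ingredient the abstract singles out.
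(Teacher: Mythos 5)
Your proposal is correct, but it takes a genuinely different route from the paper. The paper works at the operator level: it introduces a bi-resolvent $(Z-A)^{-1}$ with $Z=z_1I_U+z_2I_W$, expands it into even and odd powers of $A$, matches it against the generating function $\sum_r t^rA_r$ of the non-backtracking walk matrices via the substitution $z_1=\rho+\frac{k-1}{\rho}$, $z_2=\rho+\frac{l-1}{\rho}$, $t=\rho^{-1}$, and then pairs the resulting operator identity with $\delta_e$ and monotone truncations $f_m$ of $f$; the growth-rate comparison is read off directly from the resulting two-sided scalar identity between $\sum_r b_r(f)(\cdot)^{-r}$ and $\sum_r a_r(f)\rho^{-r}$, with no singularity analysis. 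You instead work with scalar Green functions from the start: the first-entrance decomposition along geodesics yields $B_f(z)=G_0(z)\bigl(A^{\mathrm{ev}}(P(z))+F_l(z)A^{\mathrm{od}}(P(z))\bigr)$ with $P=F_kF_l$ (this identity is correct; the odd-distance case should be derived as $G(v,w)=G(w,v)=F_lP^jG_0$, which is what you do), and you then locate the dominant singularity of $B_f$ on the positive axis via Pringsheim, the monotonicity and local invertibility of $P$ on $(0,(\sqrt{k-1}+\sqrt{l-1})^{-1})$, and the fact that radii of non-negatively supported series combine by minimum. Your inversion $z^2=P/\bigl((1+(k-1)P)(1+(l-1)P)\bigr)$ is exactly the paper's change of variables in disguise, and both proofs rely on the same spectral-radius lower bound $\beta(f)\ge\sqrt{k-1}+\sqrt{l-1}$. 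What the paper's route buys is that one never needs to prove the composite generating function is genuinely \emph{singular} at the claimed radius: monotone convergence reduces everything to comparing domains of finiteness of two non-negative series. What your route buys is an explicit closed form for $B_f$ and no need for the operator-norm estimate $\limsup_r\|A_r\|^{1/r}$ via the Hashimoto matrix. The two delicate points you flag are the right ones, and your treatment of the second (Pringsheim for $A^{\mathrm{ev}},A^{\mathrm{od}}$ transported through the local biholomorphism $P$ and the non-vanishing analytic factors $G_0$, $F_lG_0$) closes it in the regime $\alpha(f)>((k-1)(l-1))^{1/4}$; do make explicit that in the regime $\alpha(f)\le((k-1)(l-1))^{1/4}$ the upper bound on the radius comes entirely from the separate walk-counting lower bound on $\beta(f)$ and not from the singularity of $G_0$ at the endpoint, since a product of two functions each singular there need not itself be singular there.
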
 

Much effort has been made to generalise results from regular graphs to bi-regular graphs due to the similarity of their graph structure, for example, on the study of mixing rate of random walks \cite{Kempton2016} and optimal spectral gap \cite{BDH2022}. It would be interesting to determine if the outliers for the spectrum of random bi-regular graphs follow a similar staircase behaviour as a consequence of Theorem \ref{thm:co-growth2} \`{a} la Chen, Garza-Vargas, Tropp and van Handel.

Our proof of Theorem \ref{thm:co-growth2} differs in flavour to the approach of Puder, and is inspired by a modern proof of the classical co-growth formula due to Lyons and Peres \cite[Theorem 6.10]{LyonPeres2017}. The starting point is establishing an identity that connects a generalised resolvent involving powers of the adjacency matrix to the non-backtracking walk matrices (see equation \eqref{Jtransform2}). This identity serves as an operator-level co-growth formula which, with standard analytical techniques, can be used to derive the generalised co-growth formula for all non-negative functions. 

In fact, our approach also provides a simplified proof of the generalised co-growth formula for the regular tree (the first part of Theorem \ref{thm:co-growth}) which we demonstrate in Section \ref{sec:regular}. In Section \ref{sec:bi-regular} we then define the bi-resolvent for a bi-regular tree, connect it to the non-backtracking walk matrices and prove Theorem \ref{thm:co-growth2}.

By lifting, we also obtain the same generalised co-growth rate for any regular or bi-regular graph.

\begin{corollary}
The formula of Theorem \ref{thm:co-growth2} holds for any bi-regular graph $G$.
\end{corollary}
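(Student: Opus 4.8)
The plan is to deduce the corollary from Theorem \ref{thm:co-growth2} by passing to the universal cover. Let $G$ be a $(k,l)$-bi-regular graph; since $a_r(f)$ and $b_r(f)$ only involve walks issued from the base vertex $v$, we may assume $G$ is connected by restricting to the connected component of $v$. Let $\pi\colon T\to G$ be the universal covering map. As any covering of a graph is a local isomorphism, $\pi$ preserves vertex degrees, and as $T$ is simply connected it is a tree; hence $T$ is the $(k,l)$-bi-regular tree. Fix a lift $\tilde v\in\pi^{-1}(v)$, and for a non-zero non-negative $f\colon V(G)\to\mathbb R_{\ge 0}$ set $\tilde f=f\circ\pi\colon V(T)\to\mathbb R_{\ge0}$, which is again non-negative and non-zero.

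First I would record the walk-lifting dictionary. By the unique path-lifting property of coverings, composition with $\pi$ and lifting with prescribed initial vertex $\tilde v$ are mutually inverse bijections between walks of length $r$ from $v$ in $G$ and walks of length $r$ from $\tilde v$ in $T$, and this bijection satisfies $\mathrm{end}(\tilde p)\in\pi^{-1}(\mathrm{end}(p))$, so that $\tilde f(\mathrm{end}(\tilde p))=f(\mathrm{end}(p))$. The point requiring a little care is that this bijection restricts to one between \emph{non-backtracking} walks on the two sides: since $\pi$ maps the set of neighbours of any vertex of $T$ bijectively onto the set of neighbours of its image, a lifted walk $(\tilde v_0,\dots,\tilde v_r)$ has $\tilde v_{i-1}=\tilde v_{i+1}$ if and only if $\pi(\tilde v_{i-1})=\pi(\tilde v_{i+1})$, so a walk is non-backtracking precisely when its lift (equivalently, its projection) is.

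Combining these observations gives $a_r(f)=a_r(\tilde f)$ and $b_r(f)=b_r(\tilde f)$ for every $r$, where the left-hand quantities are computed in $G$ with base vertex $v$ and the right-hand ones in $T$ with base vertex $\tilde v$. Taking $r$-th roots and $\limsup$s yields $\alpha(f)=\alpha(\tilde f)$ and $\beta(f)=\beta(\tilde f)$. Applying Theorem \ref{thm:co-growth2} to the non-zero non-negative function $\tilde f$ on the $(k,l)$-bi-regular tree $T$ then produces exactly the claimed identity relating $\beta(f)$ and $\alpha(f)$ on $G$. (If one wishes to state the conclusion without reference to the chosen vertex $v$, one invokes the base-vertex independence of $\alpha$ and $\beta$ from Appendix \ref{Appendix}; but the argument above already applies verbatim for each fixed $v$.)

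I do not expect a genuine obstacle here. The only subtlety is bookkeeping: verifying that the non-backtracking condition transfers in both directions under $\pi$, and noting that although $\tilde f$ is typically not finitely supported — it is constant on the fibres of $\pi$ — Theorem \ref{thm:co-growth2} is stated for \emph{arbitrary} non-negative functions on the bi-regular tree and therefore applies without modification.
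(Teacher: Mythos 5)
Your proposal is correct and is essentially the paper's own argument: lift $f$ to $\tilde f=f\circ\pi$ on the universal covering $(k,l)$-bi-regular tree, use unique path lifting to get $a_r(f)=a_r(\tilde f)$ and $b_r(f)=b_r(\tilde f)$, and apply Theorem \ref{thm:co-growth2}. You simply spell out the details (local isomorphism preserving the non-backtracking condition, etc.) that the paper leaves implicit.
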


\begin{proof}
Lifting the function $f$ to a function $\tilde{f}$ on the universal covering bi-regular tree we readily obtain $a_r(f)=a_r(\tilde{f})$ and $b_r(f)=b_r(\tilde{f})$ for all $r\in\mathbb{N}$ where on the base we fix a vertex $v$ and on the cover we choose any fixed element in its fibre. The result is then immediate from Theorem \ref{thm:co-growth2}.
\end{proof}

\section{The regular tree}\label{sec:regular}

In this section $G$ will be the infinite $d$-regular tree with adjacency operator $A$. The powers of $A$ are related to the resolvent of $A$ by the following formula:
\begin{equation}\label{walkgenerating}
    (z-A)^{-1}=\sum_{r=0}^\infty z^{-r-1}A^r,
\end{equation}
where the absolute convergence (in operator norm) holds whenever $|z|>2\sqrt{d-1}=\|A\|$.

For any locally-finite graph, we can also define its related non-backtracking walk matrices $\{A_r\}_{r=0}^{\infty}$ (not to be confused with the Hashimoto matrix). Explicitly, we have
$$\left(A_r\right)_{i,j}=\# \{\text{non-backtracking walks from $i$ to $j$ of length $r$}\},$$
with the additional convention $A_0=I$. For a regular graph, it is direct to check from the recurrence relation of $\{A_r\}_{r=0}^{\infty}$ (See for example, \cite[Lemma 1.4.1, 1.4.2]{DSV2003} or Lemma \ref{lemma:NBWMrecurrence} of this note) that (formally) the generating function of these non-backtracking matrices is given by
\begin{equation}\label{nonbacktrackingwalkgenerating}
    (1-t^2)\left(\left(1+(d-1)t^2\right)I-tA\right)^{-1}=\sum_{r=0}^{\infty} t^rA_r.
\end{equation}
On the $d$-regular tree, the series on the right-hand side of \eqref{nonbacktrackingwalkgenerating} converges absolutely (in operator norm) when $|t|<1/\sqrt{d-1}$, see \eqref{eq:A_r-limsup} in the proof of Lemma \ref{lemma:NBWMrecurrence} specialized to $d_U=d_W=d$ for the sake of completeness. In fact, one can directly show

$$\lVert A_r\rVert\leq \mathrm{const}\cdot r(d-1)^{r/2},$$
which also confirms the absolute convergence.

Comparing the left hand side of \eqref{walkgenerating} and \eqref{nonbacktrackingwalkgenerating},   it follows that for any $|\rho|>\sqrt{d-1}$, as operators on $\ell^2(V(G))$,
\begin{equation}\label{Jtransform}
\sum_{r=0}^{\infty}  \left(\rho+\frac{d-1}{\rho}\right)^{-r-1}A^r=\left(\rho-\rho^{-1}\right)^{-1}\sum_{r=0}^{\infty} \rho^{-r}A_r.
\end{equation}

With these preparations, we are now ready to prove the first part of Theorem \ref{thm:co-growth}.

\begin{proof}[Proof of Theorem \ref{thm:co-growth} for the regular tree]
Fix an origin $e$ of $G$ and let $\delta_e$ be the indicator function at $e$. For any non-negative function $f$ on $G$, define its finitely supported truncation $f_r$  by
$$f_r(v)=
\begin{cases}
  f(v)& \mathrm{dist}(e,v) \leq r,\\
  0& \text{otherwise}.
\end{cases}
$$
In particular, the $f_r$ are in $\ell^2(V)$. By definition, $0\leq f_1 \leq f_2 \leq f_3\leq \cdots \leq f$ and 
$$a_r(f)=\langle f, A_r \delta_e \rangle\ \ \text{and}\ \ b_r(f)=\langle f, A^r \delta_e \rangle.$$
By equation (\ref{Jtransform}), for any $m\in\mathbb{N}$ and any $\rho > \sqrt{d-1}$, 
$$\left\langle f_m, \sum_{r=0}^{\infty} \left(\rho+\frac{d-1}{\rho}\right)^{-r-1} A^r\delta_e\right\rangle =\left\langle f_m, \left(\rho-\rho^{-1}\right)^{-1}\sum_{r=0}^{\infty} \rho^{-r}A_r \delta_e\right\rangle,$$
which is equivalent to
$$\sum_{r=0}^{\infty}b_r(f_m) \left(\rho+\frac{d-1}{\rho}\right)^{-r-1}=\left(\rho-\rho^{-1}\right)^{-1}\sum_{r=0}^{\infty} a_r(f_m)\rho^{-r}.$$
Notice that for any fixed $r\in \mathbb{N}$ and $m<n \in \mathbb{N}$, we have 
$$0\leq a_r(f_m)\leq a_r(f_n)\leq a_r(f)\ \ \text{and}\ \ 0\leq b_r(f_m)\leq b_r(f_n)\leq b_r(f).$$
Applying the monotone convergence theorem, 
\begin{align*}
    \sum_{r=0}^{\infty}b_r(f) \left(\rho+\frac{d-1}{\rho}\right)^{-r-1}&=\sum_{r=0}^{\infty}\lim_{m\rightarrow \infty}b_r(f_m) \left(\rho+\frac{d-1}{\rho}\right)^{-r-1}\\
    &=\lim_{m\rightarrow \infty} \sum_{r=0}^{\infty}b_r(f_m) \left(\rho+\frac{d-1}{\rho}\right)^{-r-1}\\
    &=\left(\rho-\rho^{-1}\right)^{-1}\lim_{m\rightarrow \infty} \sum_{r=0}^{\infty} a_r(f_m)\rho^{-r}\\
    &=\left(\rho-\rho^{-1}\right)^{-1}\sum_{r=0}^{\infty} \lim_{m\rightarrow \infty} a_r(f_m)\rho^{-r}\\
    &=\left(\rho-\rho^{-1}\right)^{-1}\sum_{r=0}^{\infty} a_r(f)\rho^{-r}.
\end{align*}
Note that both sides of $$\sum_{r=0}^{\infty}b_r(f) \left(\rho+\frac{d-1}{\rho}\right)^{-r-1}=\left(\rho-\rho^{-1}\right)^{-1}\sum_{r=0}^{\infty} a_r(f)\rho^{-r}$$ may be infinite.
However, if $\alpha(f)=\limsup_{r\rightarrow \infty}{a_r}(f)^{1/r}=\rho_0 \geq \sqrt{d-1}$, then for all $\rho>\rho_0$, $$\sum_{r=0}^{\infty}b_r(f) \left (\rho+\frac{d-1}{\rho}\right)^{-r-1}=\left(\rho-\rho^{-1}\right)^{-1}\sum_{r=0}^{\infty} a_r(f)\rho^{-r}<+\infty,$$ therefore $\limsup_{r\rightarrow \infty}{b_r(f)}^{1/r}\leq \rho_0+\frac{d-1}{\rho_0}$. Similarly if $\limsup_{r\rightarrow \infty}{b_r}(f)^{1/r}=\rho_0+\frac{d-1}{\rho_0}$ for some $\rho_0 \geq \sqrt{d-1}$ we have $\limsup_{r\rightarrow \infty}{a_r}(f)^{1/r}\leq \rho_0$. 

It remains to show that $\beta(f)\geq 2\sqrt{d-1}$ as long as $f$ is not identically zero. Let $v\in V(G)$ be such that $f(v)>0$. By the non-negativity of $f$, it suffices to show that $\beta(\delta_v)\geq 2\sqrt{d-1}$. Let $l$ be the distance between $e$ and $v$ in the tree, and fix a (in fact the only) non-backtracking path from $e$ to $v$ realising this distance. By definition,  
$$b_r(\delta_v)=\# \{\text{walks from $e$ to $v$ of length $r$}\}.$$
For any $r>l$, one can extend uniquely a walk from $e$ to $e$ of length $r-l$ to a walk from $e$ to $v$ by continuing the walk with the fixed path from $e$ to $v$. Thus we have 
$$b_r(\delta_v)\geq \# \{\text{walks from $e$ to $e$ of length $r-l$}\}=b_{r-l}(\delta_e),$$
and it follows that $\limsup_{r\rightarrow \infty}{b_r}(\delta_v)^{1/r}\geq \limsup_{r\rightarrow \infty}{b_r}(\delta_e)^{1/r}$. However, 
$$\beta(\delta_e)=\limsup_{r\rightarrow \infty}\langle \delta_e, A^r \delta_e \rangle ^{1/r}=\lVert A \rVert=2\sqrt{d-1},$$
as required.
\end{proof}
\begin{remark}
    By replacing $\rho$ by $-\rho$ in (\ref{Jtransform}), we can show that \begin{equation*}
\sum_{r=0}^{\infty} A^{2r} \left(\rho+\frac{d-1}{\rho}\right)^{-2r-1}=\left(\rho-\rho^{-1}\right)^{-1}\sum_{r=0}^{\infty} A_{2r} \rho^{-2r}.
\end{equation*}
\begin{equation*}
\sum_{r=0}^{\infty} A^{2r+1} \left(\rho+\frac{d-1}{\rho}\right)^{-2r-2}=\left(\rho-\rho^{-1}\right)^{-1}\sum_{r=0}^{\infty} A_{2r+1} \rho^{-2r-1}.
\end{equation*}
The same argument as above leads to other co-growth-type formulas in \cite[Theorem 2.1]{Puder2024}.
\end{remark}

\section{The bi-regular tree}\label{sec:bi-regular}

In this section, we prove Theorem \ref{thm:co-growth2}. Let $G=(V, E)$, $V=U \sqcup W$ be a $(d_U,d_W)$-bi-regular tree where $U$ and $W$ are the set of vertices of degree $d_U$ and $d_W$ respectively.
Denote its adjacency matrix as $A$ and degree matrix as $D$.  Define the projection matrices $I_U$ and $I_W$ such that 
$$\left(I_U\right)_{i,j}=
\begin{cases}
  1& i=j \in U,\\
  0& \text{otherwise},
\end{cases}
\,\,\,\,\, \text{and}\,\,\,\,\,
\left(I_W\right)_{i,j}= \begin{cases}
  1& i=j \in W,\\
  0& \text{otherwise}.
\end{cases}
$$
Then we have 
$$D=d_U I_U+d_W I_W.$$

To prove the co-growth formula for the bi-regular tree, we will study a generalisation of the resolvent which we call the \textit{bi-resolvent}. To this end, for any $z_1,z_2 \in \mathbb{C}$, let 
    $$Z=z_1I_U+z_2I_W.$$
The bi-resolvent of $A$ at $(z_1,z_2)$ is then defined as
    $$(Z-A)^{-1}$$
whenever $Z-A$ has a bounded $\ell^2$ inverse. 
\begin{lemma}
The bi-resolvent satisfies the identity
    \begin{align} \label{biresolvent-formula}
    (Z-A)^{-1}=\sum_{r=0}^\infty (z_1z_2)^{-r}A^{2r}Z^{-1} +\sum_{r=0}^\infty (z_1z_2)^{-(r+1)}A^{2r+1},
    \end{align}
where the series converge absolutely in operator norm whenever $|z_1z_2|>\|A\|^2=(\sqrt{d_U-1}+\sqrt{d_W-1})^2$.
\end{lemma}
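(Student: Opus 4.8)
The plan is to exploit the bipartiteness of $G$ to factor $Z-A$ and reduce the bi-resolvent to an ordinary Neumann series in $A^2$. Write $Z'=z_2I_U+z_1I_W$ for the ``swapped'' companion of $Z$. Since every edge of $G$ joins $U$ to $W$, the adjacency operator satisfies the intertwining relations $I_UA=AI_W$ and $I_WA=AI_U$, from which a direct computation using $I_U^2=I_U$, $I_W^2=I_W$, $I_UI_W=0$ yields
$$ZA=AZ',\qquad Z'A=AZ,\qquad ZZ'=Z'Z=z_1z_2\,I .$$
In particular $A^2$ commutes with $Z$, $Z'$, $I_U$ and $I_W$. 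Observe also that the hypothesis forces $|z_1z_2|>\|A\|^2>0$, hence $z_1,z_2\ne0$, so $Z$ and $Z'$ are invertible with $Z^{-1}=z_1^{-1}I_U+z_2^{-1}I_W=(z_1z_2)^{-1}Z'$.

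The key step is the factorisation
$$(Z-A)(Z'+A)=(Z'+A)(Z-A)=z_1z_2\,I-A^2 ,$$
which is immediate from $ZA=AZ'$, $Z'A=AZ$ and $ZZ'=z_1z_2I$ after expanding the brackets. Writing $z_1z_2I-A^2=z_1z_2\bigl(I-(z_1z_2)^{-1}A^2\bigr)$ and using $\|(z_1z_2)^{-1}A^2\|\le|z_1z_2|^{-1}\|A\|^2<1$ — here invoking the standard value $\|A\|=\sqrt{d_U-1}+\sqrt{d_W-1}$ on the $(d_U,d_W)$-biregular tree, which is exactly what makes the stated radius sharp — the operator $z_1z_2I-A^2$ is boundedly invertible with
$$(z_1z_2I-A^2)^{-1}=(z_1z_2)^{-1}\sum_{r=0}^{\infty}(z_1z_2)^{-r}A^{2r},$$
the series converging absolutely in operator norm. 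From the factorisation, $(Z'+A)(z_1z_2I-A^2)^{-1}$ is a right inverse of $Z-A$ and $(z_1z_2I-A^2)^{-1}(Z'+A)$ is a left inverse; as $(z_1z_2I-A^2)^{-1}$ is a power series in $A^2$ it commutes with $A$ and with $Z'$, so the two coincide and $(Z-A)^{-1}=(z_1z_2)^{-1}(Z'+A)\sum_{r\ge0}(z_1z_2)^{-r}A^{2r}$. Expanding this product, commuting $Z'$ past each $A^{2r}$, and substituting $Z^{-1}=(z_1z_2)^{-1}Z'$ turns the $Z'$-part into $\sum_{r\ge0}(z_1z_2)^{-r}A^{2r}Z^{-1}$ and the $A$-part into $\sum_{r\ge0}(z_1z_2)^{-(r+1)}A^{2r+1}$, which is precisely \eqref{biresolvent-formula}; absolute operator-norm convergence of these two series follows from $\|A^{2r}\|\le\|A\|^{2r}$ together with $\|Z^{-1}\|<\infty$.

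The argument is essentially mechanical once the intertwining relations are recorded, so there is no deep obstacle; the only genuine input is the operator norm $\|A\|=\sqrt{d_U-1}+\sqrt{d_W-1}$, which pins down the region of convergence, and the only place care is needed is the bookkeeping with the swapped operator $Z'$ so that each commutation is applied in the correct order. An alternative, slightly more computational route avoids the factorisation altogether: one verifies directly that the right-hand side $S$ of \eqref{biresolvent-formula} satisfies $(Z-A)S=S(Z-A)=I$ by multiplying out and repeatedly using $ZA=AZ'$, $Z'A=AZ$ and the commutation of $A^2$ with $Z$, which establishes simultaneously that $Z-A$ is invertible and that the identity holds.
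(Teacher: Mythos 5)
Your proof is correct, and it takes a genuinely different algebraic route from the paper's. The paper works with the formal Neumann series $(Z-A)^{-1}=(I-Z^{-1}A)^{-1}Z^{-1}=\sum_{r\ge 0}(Z^{-1}A)^rZ^{-1}$ and uses the single identity $Z^{-1}AZ^{-1}=(z_1z_2)^{-1}A$ (the same bipartiteness input as your intertwining relations) to collapse the even and odd powers into the two series of \eqref{biresolvent-formula}; it then asserts that, once convergence is secured for $|z_1z_2|>\|A\|^2$, one checks "by computation" that the resulting operator inverts $Z-A$. You instead introduce the swapped operator $Z'$ and establish the two-sided factorisation $(Z-A)(Z'+A)=(Z'+A)(Z-A)=z_1z_2I-A^2$, reducing everything to the honest Neumann series for $(z_1z_2I-A^2)^{-1}$, which converges precisely because $\|A^2\|=\|A\|^2$ ($A$ is self-adjoint). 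What your version buys is that invertibility of $Z-A$ and the identification of its inverse come out in one rigorous step (explicit left and right inverses that coincide because all factors commute), rather than being deferred to a final verification of a formally derived series; the cost is the extra bookkeeping with $Z'$ and the intertwining relations $ZA=AZ'$, $Z'A=AZ$, though these are exactly equivalent to the paper's $Z^{-1}AZ^{-1}=(z_1z_2)^{-1}A$. Both arguments rely on the same external fact $\|A\|=\sqrt{d_U-1}+\sqrt{d_W-1}$ to pin down the region of convergence, so neither is more self-contained on that point.
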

\begin{proof}
Since the graph is bipartite on the sets $U$ and $W$, a direct computation readily shows that
    $$Z^{-1}AZ^{-1} = (z_1z_2)^{-1}A.$$
It follows that for any integer $r\geq 0$,
    \begin{align*}
        (Z^{-1}A)^{2r} = (Z^{-1}AZ^{-1}A)^r = (z_1z_2)^{-r}A^{2r},
    \end{align*}
and
    \begin{align*}
        (Z^{-1}A)^{2r+1}Z^{-1} = (Z^{-1}AZ^{-1}A)^rZ^{-1}AZ^{-1} = (z_1z_2)^{-(r+1)}A^{2r+1}.
    \end{align*}
Formally, we can write
    \begin{align*}
   (Z-A)^{-1}&=(1-Z^{-1}A)^{-1}Z^{-1}\\
    &=\sum_{r=0}^{\infty}(Z^{-1}A)^rZ^{-1}\\
    &=\sum_{r=0}^{\infty} (Z^{-1}A)^{2r}Z^{-1} + (Z^{-1}A)^{2r+1}Z^{-1}\\
    &=\sum_{r=0}^\infty (z_1z_2)^{-r}A^{2r}Z^{-1}+\sum_{r=0}^\infty (z_1z_2)^{-(r+1)}A^{2r+1}.
\end{align*}
Notice that absolute convergence of the series on the right hand-side holds whenever $|z_1z_2|>\|A\|^2=(\sqrt{d_U-1}+\sqrt{d_W-1})^2$, then it is direct to check by computation that it is indeed the $\ell^2$ inverse of $Z-A$.
\end{proof}

The recurrence relation of the non-backtracking walk matrices for bi-regular trees are slightly more complicated than the regular one:
\begin{lemma}\label{lemma:NBWMrecurrence}
The non-backtracking walk matrices $\{A_r\}_{r=0}^{\infty}$ for the bi-regular tree satisfy 
    \begin{enumerate}
        \item $A_rA=A_{r+1}+A_{r-1}(D-I)$ whenever $r\geq 2$,
        \item $A_1A = A_2+D$.
    \end{enumerate}
As a consequence,  
\begin{equation}\label{NBWbigenerating}
    (1-t^2)\left(I+t^2(D-I)-tA\right)^{-1}=\sum_{r=0}^{\infty} t^rA_r,
\end{equation}
where absolute convergence of the series holds in operator norm whenever $$|t|<\left(d_U-1\right)^{-1/4}\left(d_W-1\right)^{-1/4}$$.
\end{lemma}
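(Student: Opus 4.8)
The plan is to first establish the two recurrence relations for the non-backtracking walk matrices $\{A_r\}$ combinatorially, and then deduce the generating function identity \eqref{NBWbigenerating} by a formal manipulation, checking convergence separately.

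\textbf{The recurrences.} For the recurrence $A_rA = A_{r+1} + A_{r-1}(D-I)$ with $r\geq 2$: a non-backtracking walk of length $r$ from $i$ to some vertex $u$, followed by an edge from $u$ to $j$, is a walk of length $r+1$ from $i$ to $j$. Such a concatenation is non-backtracking precisely when the last step does not immediately retrace the $r$-th step; if it does retrace, then $j$ equals the second-to-last vertex of the length-$r$ walk, and deleting the last two steps gives a non-backtracking walk of length $r-1$ from $i$ to $j$. Conversely, given a non-backtracking walk of length $r-1$ from $i$ to $j$, we may append a step from $j$ to any neighbour $u$ and return, except we must not use the neighbour that would make the original length-$(r-1)$ walk backtrack at its final step — so there are $\deg(j)-1$ valid choices when $r-1\geq 1$, which is why $D-I$ appears. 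Care is needed at $r=2$ since a length-$1$ walk has only one edge and the ``no backtracking at the final step'' constraint on the length-$(r-1)$ walk is vacuous, giving $\deg(j)$ choices instead; hence the second identity $A_1 A = A_2 + D$. (One should also note $A_1 = A$ and handle the base interaction with $A_0 = I$ carefully, but $r\geq 2$ avoids those edge cases.) I expect this combinatorial bookkeeping to be routine but the most error-prone part, particularly getting the $r=2$ boundary term right.

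\textbf{The generating function.} Set $F(t) = \sum_{r\geq 0} t^r A_r$ as a formal power series with operator coefficients. Using $A_0 = I$, $A_1 = A$, and the recurrences, compute $F(t)\bigl(I + t^2(D-I) - tA\bigr)$ term by term: the coefficient of $t^0$ is $I$; of $t^1$ is $A_1 - A = 0$; of $t^2$ is $A_2 + (D-I) - A_1 A = A_2 + D - I - (A_2 + D) = -I$; and for $r\geq 3$ the coefficient is $A_r + A_{r-2}(D-I) - A_{r-1}A = 0$ by the first recurrence applied at index $r-1$. Hence $F(t)\bigl(I + t^2(D-I) - tA\bigr) = I - t^2 I = (1-t^2)I$, and since $I + t^2(D-I) - tA$ is invertible as a formal power series (its constant term is $I$), we get \eqref{NBWbigenerating}.

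\textbf{Convergence.} To justify that the series converges absolutely in operator norm for $|t| < (d_U-1)^{-1/4}(d_W-1)^{-1/4}$, I would bound $\|A_r\|$. On the bi-regular tree, $A_r\delta_v$ is supported on vertices at distance $r$ from $v$ and counts unique non-backtracking paths, so $\|A_r\|$ grows at most like the spherical-growth rate: a non-backtracking walk of length $r$ from a vertex in $U$ alternates between $d_U - 1$ and $d_W - 1$ choices, giving roughly $(d_U-1)^{\lceil r/2\rceil}(d_W-1)^{\lfloor r/2\rfloor}$ endpoints, and one obtains $\limsup_r \|A_r\|^{1/r} \leq (d_U-1)^{1/4}(d_W-1)^{1/4}$ (this is the bound labelled \eqref{eq:A_r-limsup} referenced in the excerpt) via a Cauchy–Schwarz / counting estimate of the form $\|A_r\| \leq \|A_r\delta\|_1^{1/2}\cdot(\text{in-degree bound})^{1/2}$, using that each target vertex is reached by at most one non-backtracking path. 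The main obstacle here is making the operator-norm estimate clean rather than just an $\ell^1$ or $\ell^\infty$ bound; the standard trick is to use the Schur test, bounding $\|A_r\|$ by the geometric mean of the maximal row sum and maximal column sum of $A_r$, both of which are the relevant sphere sizes. Once $\limsup_r\|A_r\|^{1/r} \leq (d_U-1)^{1/4}(d_W-1)^{1/4}$ is in hand, absolute convergence of $\sum t^r A_r$ in operator norm for $|t|$ below the stated radius is immediate, and the formal identity \eqref{NBWbigenerating} then holds as a genuine operator identity on $\ell^2(V)$.
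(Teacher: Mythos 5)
Your treatment of the recurrences and of the formal generating-function identity is correct and is essentially the paper's argument: the paper proves the two recurrences by the same decomposition of length-$(r+1)$ walks whose first $r$ steps are non-backtracking, and leaves the coefficient-by-coefficient verification of \eqref{NBWbigenerating} to the reader, which your computation of the coefficients of $t^0,t^1,t^2$ and $t^r$ for $r\geq 3$ supplies correctly (modulo a harmless index slip: the boundary case you describe is $r=1$, not $r=2$).

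The convergence step, however, has a genuine gap. The Schur test bounds $\|A_r\|_{\ell^2\to\ell^2}$ by the geometric mean of the maximal row sum and maximal column sum, and on a $(d_U,d_W)$-bi-regular tree both of these equal the sphere size $|S(v,r)|\asymp\left((d_U-1)(d_W-1)\right)^{r/2}$; their geometric mean is therefore again $\left((d_U-1)(d_W-1)\right)^{r/2}$, which yields only $\limsup_r\|A_r\|^{1/r}\leq\sqrt{(d_U-1)(d_W-1)}$ --- the \emph{square} of the required bound $(d_U-1)^{1/4}(d_W-1)^{1/4}$. The same is true of any estimate built from row sums, column sums or in-degrees: the in-degree of a target vertex under $A_r$ is the full sphere size, not $O(1)$. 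The fact that each target is reached by at most one non-backtracking path from a \emph{fixed} source only shows that the entries of $A_r$ are $0$ or $1$, hence that the column norm $\|A_r\delta_v\|_2=|S(v,r)|^{1/2}$ has the right growth rate; but $\max_v\|A_r\delta_v\|_2$ is the $\ell^1\to\ell^2$ norm, and upgrading it to the $\ell^2\to\ell^2$ norm (up to polynomial factors) is a genuine theorem of Haagerup/property-(RD) type for trees, not a consequence of Cauchy--Schwarz. The deficit matters downstream: with radius only $\left((d_U-1)(d_W-1)\right)^{-1/2}$, identity \eqref{Jtransform2} would be available only for $\rho>\sqrt{(d_U-1)(d_W-1)}$, leaving the range $(d_U-1)^{1/4}(d_W-1)^{1/4}\leq\alpha(f)\leq\sqrt{(d_U-1)(d_W-1)}$ of Theorem \ref{thm:co-growth2} uncovered. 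The paper obtains the correct fourth-root rate by writing $A_{r+1}=SB^rE$ with $B$ the Hashimoto operator on $\ell^2(\overrightarrow{E})$ and invoking the Angel--Friedman--Hoory theorem that the $\ell^2$ spectral radius of $B$ on the universal cover equals $\sqrt{\lim_{r\to\infty}|B(v,r)|^{1/r}}$; that square root is exactly the saving your argument is missing.
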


\begin{proof}
Suppose first that $r\geq 2$. Then, $(A_rA)_{i,j}$ counts the number of paths of length $r+1$ from $i$ to $j$ such that the first $r$ steps are non-backtracking. There are two possibilities for such a path. First, the full path could be non-backtracking, of which there are $(A_{r+1})_{i,j}$ such paths. Otherwise, the final step is backtracking and so the path starts as a length $(r-1)$ non-backtracking walk from $i$ to $j$ of which there are $(A_{r-1})_{i,j}$ choices, followed by a non-backtracking step from $j$ of which there are $\mathrm{deg}(j)-1=(D-I)_{j,j}$ choices, followed by the single choice of returning over this edge back to vertex $j$. In total, there are $(A_{r-1})_{i,j}(D-I)_{j,j}=(A_{r-1}(D-I))_{i,j}$ choices for the second option, and we conclude the first claim of the lemma.

When $r=1$, $(A_1A)_{i,j}=A^2_{i,j}$ is counting the number of paths of length two from $i$ to $j$. If $i\neq j$ then this is simply the number of non-backtracking walks of length 2 from $i$ to $j$ and is hence $(A_2)_{i,j}$. On the other hand, if $i=j$ then the path is traversing an edge emanating from $i$ of which there are $\mathrm{deg}(i)=D_{i,i}$ choices, followed by the single choice to traverse back the same edge. Since $A_2$ is zero on the diagonal and $D$ is zero off the diagonal, the second claim of the lemma follows.

The equality of both sides of \eqref{NBWbigenerating} can be readily checked (formally) by making use of the aforementioned recurrence relations. The absolute convergence then holds whenever $|t|\limsup_{r\to\infty}\|A_r\|^\frac{1}{r}<1$. To compute this $\limsup$ we let $B$ be the non-backtracking adjacency operator, or Hashimoto matrix, acting on $\ell^2(\overrightarrow{E})$, where $\overrightarrow{E}$ is the set of directed edges of $G$, by
$$Bf(\overrightarrow{e_0})=\sum_{\overrightarrow{e}:\overrightarrow{e_0}\to\overrightarrow{e}}f(\overrightarrow{e}),$$
where $\overrightarrow{e_0}\to\overrightarrow{e}$ means $\mathrm{end}(\overrightarrow{e_0})=\mathrm{start}(\overrightarrow{e})$ and $\overrightarrow{e}$ is not the reversed direction of $\overrightarrow{e_0}$. In other words, $B$ is the matrix indexed by directed edges of the graph with entries
    $$B_{\overrightarrow{e_1},\overrightarrow{e_2}}=\begin{cases}
        1 & \text{if } \overrightarrow{e_1}\to\overrightarrow{e_2},\\
        0 & \text{otherwise.}
    \end{cases}$$
Moreover, an easy computation shows that 
    $$B^k_{\overrightarrow{e_1},\overrightarrow{e_2}} = \#\left\{\parbox{10cm}{non-backtracking walks of length $k+1$ from $\mathrm{start}(\overrightarrow{e_1})$ to $\mathrm{end}(\overrightarrow{e_2})$ starting with edge $\overrightarrow{e_1}$ and ending with edge $\overrightarrow{e_2}$.}\right\}. $$
Next, we let $S$ and $E$ be the incidence matrices whose rows are indexed by the vertices and directed edges of $G$ respectively, and whose columns are indexed by the directed edges and vertices of $G$ respectively and whose entries are given by
    \begin{align*}S_{v,\overrightarrow{e}}=\begin{cases}
        1 & \text{if } \mathrm{start}(\overrightarrow{e})=v,\\
        0 & \text{otherwise.}
    \end{cases} \hspace{.5cm}
    E_{\overrightarrow{e},v}=\begin{cases}
        1 & \text{if } \mathrm{end}(\overrightarrow{e})=v,\\
        0 & \text{otherwise.}
    \end{cases}
    \end{align*}
An easy computation then shows that 
    $$A_{r+1} = SB^rE.$$
We can also check that as an operator from $\ell^2(V)\to\ell^2(\overrightarrow{E})$ one has $\|E\|\leq\max\{d_U,d_W\}$. Similarly, as an operator from $\ell^2(\overrightarrow{E})\to\ell^2(V)$, one has $\|S\|\leq\max\{d_U,d_W\}$. 

Moreover, by \cite[Theorem 4.2]{AFH2015} the spectral radius of $B$ as an operator $\ell^2(\overrightarrow{E})\to\ell^2(\overrightarrow{E})$ is given by $\sqrt{\lim_{r\to\infty}|B(v,r)|^\frac{1}{r}}$, where $B(v,r)$ is the ball of radius $r$ centred at the vertex $v$ in $G$, and the quantity is independent of the choice of $v$ in $G$. Without loss of generality, assume that the degree of $v$ is $d_U$, then the number of vertices of $G$ that are at a distance of $r$ from $v$ is
    $$
    \begin{cases}
        d_U\left(d_W-1\right)^{\frac{r}{2}}\left(d_U-1\right)^{\frac{r}{2}-1} & \text{if $r\geq 2$ is even}\\
        d_U\left(d_W-1\right)^{\frac{r-1}{2}}\left(d_U-1\right)^{\frac{r-1}{2}} & \text{if $r\geq 2$ is odd}
    \end{cases} \leq d_U\left(d_W-1\right)^\frac{r}{2}\left(d_U-1\right)^{\frac{r}{2}}.$$
It follows that 
    $$\sqrt{\lim_{r\to\infty}|B(v,r)|^\frac{1}{r}} \leq \left(d_W-1\right)^\frac{1}{4}\left(d_U-1\right)^{\frac{1}{4}}\sqrt{\lim_{r\to\infty}\left(d_Ur\right)^\frac{1}{r}}=\left(d_W-1\right)^\frac{1}{4}\left(d_U-1\right)^{\frac{1}{4}}.$$
Putting everything together, we obtain
    \begin{align}
    \label{eq:A_r-limsup}
    \limsup_{r\to\infty}\|A_r\|^\frac{1}{r}&=\limsup_{r\to\infty}\|SB^rE\|^\frac{1}{r} \nonumber\\
    &\leq \limsup_{r\to\infty}\|S\|^\frac{1}{r}\limsup_{r\to\infty}\|E\|^\frac{1}{r}\limsup_{r\to\infty}\|B^r\|^\frac{1}{r} \\
    &=\mathrm{spec\ rad}(B)=\left(d_W-1\right)^\frac{1}{4}\left(d_U-1\right)^{\frac{1}{4}}, \nonumber
    \end{align}
where the penultimate equality follows from the Gelfand formula and the bounds on the operator norms of $S$ and $E$. 
\end{proof}

For any $|\rho|>\left(d_U-1\right)^{1/4}\left(d_W-1\right)^{1/4}$ if we set $z_1=\rho+\left(d_U-1\right)/\rho$ and $z_2=\rho+\left(d_W-1\right)/\rho$, then we have
    $$|z_1z_2|>(\sqrt{d_U-1}+\sqrt{d_W-1})^2.$$
Thus by comparing the left hand side of \eqref{biresolvent-formula} and \eqref{NBWbigenerating},
\begin{equation}\label{Jtransform2}
\begin{aligned}
    \left(\rho-\rho^{-1}\right)^{-1}\sum_{r=0}^{\infty} \rho^{-r}A_r&=\sum_{r=0}^\infty (z_1z_2)^{-r}A^{2r}Z^{-1}+\sum_{r=0}^\infty (z_1z_2)^{-(r+1)}A^{2r+1}
\end{aligned}
\end{equation}
as operators on $\ell^2(V(G))$.

\begin{proof}[Proof of Theorem \ref{thm:co-growth2}]
Fix an origin $e$ of $G$ and let $\delta_e$ be the indicator function at $e$. Without loss of generality, assume that $e\in U$. As in Section \ref{sec:regular}, for any non-negative function $f$ on $G$, $f_r$ will denote its finitely supported truncation at distance $r$ from $e$.
By equation (\ref{Jtransform2}), for any $m\in\mathbb{N}$ and any $\rho > \left(d_U-1\right)^{1/4}\left(d_W-1\right)^{1/4}$, 
\begin{align*}&\left\langle f_m, \sum_{r=0}^\infty (z_1z_2)^{-r}A^{2r}Z^{-1}\delta_e+\sum_{r=0}^\infty (z_1z_2)^{-(r+1)}A^{2r+1}\delta_e\right\rangle\\
&\hspace{3cm}=\left\langle f_m, \left(\rho-\rho^{-1}\right)^{-1}\sum_{r=0}^{\infty} \rho^{-r}A_r \delta_e\right\rangle,
\end{align*}
which, since $Z^{-1}\delta_e=z_1^{-1}\delta_e$, is equivalent to
$$z_1^{-1}\sum_{r=0}^{\infty}\frac{b_{2r}(f_m)}{(z_1z_2)^{r}}+\sum_{r=0}^{\infty}\frac{b_{2r+1}(f_m)}{(z_1z_2)^{r+1}} =\left(\rho-\rho^{-1}\right)^{-1}\sum_{r=0}^{\infty} a_r(f_m)\rho^{-r}.$$
Notice that for any fixed $r\in \mathbb{N}$ and $m<n \in \mathbb{N}$, we have 
$$0\leq a_r(f_m)\leq a_r(f_n)\leq a_r(f)\ \ \text{and}\ \ 0\leq b_r(f_m)\leq b_r(f_n)\leq b_r(f).$$
Thus by applying the monotone convergence theorem as in the proof of Theorem \ref{thm:co-growth}, we arrive at the equality 
    \begin{align}
        \label{eq:equivalence}
        z_1^{-1}\sum_{r=0}^{\infty}\frac{b_{2r}(f)}{(z_1z_2)^{r}}+\sum_{r=0}^{\infty}\frac{b_{2r+1}(f)}{(z_1z_2)^{r+1}} =\left(\rho-\rho^{-1}\right)^{-1}\sum_{r=0}^{\infty} a_r(f)\rho^{-r},
    \end{align}
where both sides may be infinite.

However, if $\alpha(f)=\limsup_{r\rightarrow \infty}{a_r}(f)^{1/r}=\rho_0 \geq \left(d_U-1\right)^{1/4}\left(d_W-1\right)^{1/4}$, then for all $\rho>\rho_0$, the right-hand side, and hence the left-hand side, of \eqref{eq:equivalence} are finite. But, since $z_1,z_2\geq 1$, we then have
    $$(z_1z_2)^{-1}\sum_{r=0}^\infty \frac{b_r(f)}{(z_1z_2)^\frac{r}{2}}\leq z_1^{-1}\sum_{r=0}^{\infty}\frac{b_{2r}(f)}{(z_1z_2)^{r}}+\sum_{r=0}^{\infty}\frac{b_{2r+1}(f)}{(z_1z_2)^{r+1}} < \infty,$$
and so $\limsup_{r\to\infty}b_r(f)^{\frac{1}{r}}\leq \left(\rho_0+\left(d_U-1\right)/\rho_0\right)^{1/2}\left(\rho_0+\left(d_W-1\right)/\rho_0\right)^{1/2}$. 

Conversely, if $\limsup_{r\to\infty}b_r(f)^{\frac{1}{r}}= \left(\rho_0+\left(d_U-1\right)/\rho_0\right)^{1/2}\left(\rho_0+\left(d_W-1\right)/\rho_0\right)^{1/2}$ for some $\rho_0\geq \left(d_U-1\right)^{1/4}\left(d_W-1\right)^{1/4}$, then for any $\rho>\rho_0$, we have 
    \begin{align*}
        \limsup_{r\to\infty}\left(b_{2r}(f)(z_1z_2)^{-r}\right)^\frac{1}{r}&<1,\\
        \limsup_{r\to\infty}\left(b_{2r+1}(f)(z_1z_2)^{-r}\right)^\frac{1}{r}&<1,
    \end{align*}
so that both sides of \eqref{eq:equivalence} are finite and $\limsup_{r\rightarrow \infty}{a_r}(f)^{1/r}\leq\rho_0$. 

It remains to show that $\limsup_{r\to\infty}b_r(f)^{\frac{1}{r}}\geq \sqrt{d_U-1}+\sqrt{d_W-1}$, where the right-hand side of this expression is precisely $(\rho+\left(d_U-1\right)/\rho)^{1/2}(\rho+\left(d_W-1\right)/\rho)^{1/2}$ with $\rho=\left(d_U-1\right)^{1/4}\left(d_W-1\right)^{1/4}$. But, by an identical argument to the end of the proof of Theorem \ref{thm:co-growth}, we obtain that 
    $$\limsup_{r\to\infty}b_r(f)^{\frac{1}{r}}\geq \|A\|= \sqrt{d_U-1}+\sqrt{d_W-1},$$
as required.

\end{proof}

\appendix

\section{Vertex independence for $\alpha(f)$ and $\beta(f)$}\label{Appendix}

In this section, we prove that for any connected graph $G$, the growth rates $\alpha(f)$ and $\beta(f)$ are independent of the choice of vertex $v$. To this end, for $v$ a vertex in $G$, we let $a_r(f;v)$ and $b_r(f;v)$ be defined as $a_r(f)$ and $b_r(f)$ but with the dependence on $v$ made explicit. 
Now note that if $\Tilde{G}$ is the universal cover of $G$ and $\Tilde{f}$ the pull-back of $f$ under the universal covering map, then for any vertex $\Tilde{v}$ in fiber of $v$, we have $$a_r(f;v)=a_r(\Tilde{f};\Tilde{v})\,\,\,\,\,\text{and}\,\,\,\,\,b_r(f;v)=a_r(\Tilde{f};\Tilde{v}).$$
Thus, it suffices to consider when $G$ is a tree.

\begin{proposition}
For any vertices $u,v$ in a tree $T$ we have 
\begin{align*}
    \limsup_{r\rightarrow \infty}{a_r(f;u)^{\frac{1}{r}}} &= \limsup_{r\rightarrow \infty}{a_r(f;v)^{\frac{1}{r}}},\\
    \limsup_{r\rightarrow \infty}{b_r(f;u)^{\frac{1}{r}}} &= \limsup_{r\rightarrow \infty}{b_r(f;v)^{\frac{1}{r}}}.
\end{align*}
\end{proposition}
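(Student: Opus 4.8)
The plan is to compare walk counts based at $u$ and at $v$ by exploiting the unique geodesic between them in the tree. Let $\ell = \mathrm{dist}(u,v)$ and fix the unique non-backtracking path $\gamma$ from $u$ to $v$. The key observation is that any walk (or non-backtracking walk) from $v$ can be prepended by $\gamma$ to yield a walk from $u$, and conversely, so walk counts from the two basepoints differ only by bounded-length corrections and by bookkeeping about the endpoint function $f$. Since we are taking $\limsup$ of $r$-th roots, any multiplicative factor that is sub-exponential in $r$, and any shift of the length index by the constant $\ell$, washes out in the limit.

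First I would handle $b_r$. Given a walk $p$ from $v$ of length $r$ ending at a vertex $x$, the concatenation $\gamma * p$ is a walk from $u$ of length $r+\ell$ ending at $x$; this map is injective. Hence $b_{r+\ell}(f;u) \geq b_r(f;v)$, which after taking $r$-th roots gives $\beta(f;u) \geq \beta(f;v)$. By symmetry (running $\gamma$ in the other direction) we get the reverse inequality, so $\beta(f;u) = \beta(f;v)$. No analyticity or generating-function input is needed here.

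The non-backtracking case for $a_r$ is the step I expect to be the main obstacle, because concatenating $\gamma$ with a non-backtracking walk $p$ from $v$ need not be non-backtracking: if the first step of $p$ backtracks along the last edge of $\gamma$, the concatenation has a backtrack at $v$. To fix this I would argue as follows. Consider non-backtracking walks $p$ from $v$ of length $r$ whose \emph{first edge} is not the reverse of the last edge of $\gamma$; call these ``$\gamma$-compatible.'' For such $p$, $\gamma * p$ is a genuine non-backtracking walk from $u$ of length $r+\ell$, and the map is injective, so $a_{r+\ell}(f;u) \geq \sum_{p \text{ $\gamma$-compatible}} f(\mathrm{end}(p))$. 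It remains to see that dropping the non-$\gamma$-compatible walks costs at most a sub-exponential factor: the non-backtracking walks from $v$ whose first step goes toward $u$ are in bijection (by deleting that first step) with non-backtracking walks from the neighbour $v'$ of $v$ on $\gamma$ of length $r-1$, and these walks may themselves fail compatibility only at the very start, so iterating at most $\ell$ times (or directly noting $v$ has $\deg(v) \geq 1$ choices and only one is bad, so at least a fraction $1/\max\{d_U,d_W\}$ — but more carefully, one compares $a_r(f;v)$ to $a_{r-1}(f;v')$ plus the compatible part) one shows $\sum_{p \text{ $\gamma$-compatible}} f(\mathrm{end}(p)) \geq a_r(f;v) - C\cdot a_{r-1}(f;v')$ for suitable walks, or more cleanly that $a_{r+\ell}(f;u)$ dominates $a_r(f;v)$ up to lower-order terms. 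Taking $r$-th roots then yields $\alpha(f;u) \geq \alpha(f;v)$, and symmetry closes the argument.

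Alternatively — and this may be the cleanest route to avoid the compatibility bookkeeping — I would invoke the matrix description already set up in the excerpt: $a_r(f;v) = \langle f, A_r \delta_v\rangle$ with $A_r = S B^{r-1} E$ for $r \geq 1$. Since $G$ is connected, for a fixed $u,v$ there is a constant $m = m(u,v)$ and entries such that $\delta_v \leq (A_{\leq m})\delta_u$ entrywise in an appropriate sense; combined with the fact that $B$ and hence $A_r$ have non-negative entries and the operator-norm growth bound $\limsup_r \|A_r\|^{1/r} < \infty$ from Lemma \ref{lemma:NBWMrecurrence}, one gets $a_r(f;v) \leq \mathrm{(const)} \cdot \max_{r \le s \le r+m} a_s(f;u)$ up to the non-backtracking subtlety, which again disappears under $r$-th roots. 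Either way, the whole proposition reduces to the elementary principle that a constant shift in the length index and a sub-exponential multiplicative factor do not change the exponential growth rate, and the only genuine care required is the non-backtracking compatibility at the splice point.
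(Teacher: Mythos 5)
Your treatment of $b_r$ is correct and matches the paper's (prepend a path of bounded length; the index shift washes out in the $r$-th root). The problem is in the $a_r$ half, specifically the claim that ``dropping the non-$\gamma$-compatible walks costs at most a sub-exponential factor.'' That claim is false: take $f$ to be the indicator of the component of $u$ in $T$ minus the last edge of $\gamma$ (the edge $v'v$). Then \emph{every} non-backtracking walk from $v$ ending in the support of $f$ has first step towards $u$, so the $\gamma$-compatible sum is identically zero, while $a_r(f;v)$ is typically exponentially large (of order $(d-1)^{r}$ on a $d$-regular tree). So the non-compatible walks cannot be discarded; they can carry all of the mass. The correct repair --- which you gesture at but do not commit to --- is to keep both classes of walks and bound each separately: for adjacent $u,v$, compatible walks prepend to give at most $a_{r+1}(f;u)$, and non-compatible walks drop their first step to give at most $a_{r-1}(f;u)$, whence $a_r(f;v)\le a_{r+1}(f;u)+a_{r-1}(f;u)$ and $\alpha(f;v)\le\alpha(f;u)$ by the standard fact that a sum of two non-negative sequences has growth rate equal to the max of the two rates. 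Your second, matrix-based alternative is too vague to assess and explicitly defers the same non-backtracking subtlety, so it does not close the gap either.

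For comparison, the paper first reduces to adjacent $u,v$ (avoiding any iteration along $\gamma$) and then, rather than an inequality, records an exact identity: deleting the edge $uv$ yields a forest $T'$, and since a non-backtracking walk in a tree is a simple path, it crosses the edge $uv$ at most once and, if it starts at $v$, only as its first step. Hence $a_r(f;v)=a_r(f|_{T'};v)+a_{r-1}(f|_{T'};u)$ and symmetrically $a_r(f;u)=a_r(f|_{T'};u)+a_{r-1}(f|_{T'};v)$, so both $\alpha(f;u)$ and $\alpha(f;v)$ equal $\max\{\alpha(f|_{T'};u),\alpha(f|_{T'};v)\}$ and are therefore equal. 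This is the same underlying decomposition you need, packaged so that the ``compatibility at the splice point'' issue never arises; I would rewrite your $a_r$ argument along these lines.
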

\begin{proof}
Clearly it suffices to prove the result in the case where $u$ and $v$ are adjacent in $T$. Let $T'$  be the subgraph of $T$ obtained by deleting the edge connecting $u$ and $v$. Note that $T'$ is a forest with two components and we have the following obvious relations
$$a_r(f;v)=a_r(f|_{T'};v)+a_{r-1}(f|_{T'};u),$$
$$a_r(f;u)=a_r(f|_{T'};u)+a_{r-1}(f|_{T'};v),$$
$$b_r(f;v)\geq b_{r-1}(f;u),$$
$$b_r(f;u)\geq b_{r-1}(f;v).$$
It is then clear that the result follows for the $b_r$. Moreover, the relations for the $a_r$ mean that both $\limsup_{r\rightarrow \infty}{a_r(f;u)^{\frac{1}{r}}}$ and $\limsup_{r\rightarrow \infty}{a_r(f;v)^{\frac{1}{r}}}$ are equal to
$$
\begin{aligned}
    \max \{\limsup_{r\rightarrow \infty}{a_r(f|_{T'};u)^{\frac{1}{r}}}, \limsup_{r\rightarrow \infty}{a_r(f|_{T'};v,)^{\frac{1}{r}}}\},\\
\end{aligned}
$$
and hence equal to one another, as desired.
\end{proof}

\section*{Acknowledgments}

WL is funded by the National Natural Science Foundation of China (Grant No. 123B2013). JT is funded by the Leverhulme Trust through a Leverhulme Early Career Fellowship (Grant No. ECF-2024-440). This work originated from private communications between the authors and Doron Puder. We extend our gratitude to Doron for bringing this problem to our attention. Furthermore, WL thanks Charles Bordenave, Benoît Collins, and Shiping Liu for arranging WL’s scholarly visit to Charles, which led to meeting JT and forming a lasting friendship and collaboration.

\bibliographystyle{abbrv}

  \begin{tabular}{@{}l@{}}%
    \text{Wenbo Li}\\
    \text{School of Mathematical Sciences,}\\
    \text{University of Science and Technology of China,}\\
    \text{No.96 Jinzhai Road, Hefei,}\\
    \text{China}\\
    \texttt{patlee@mail.ustc.edu.cn}
  \end{tabular}
\ \\ \ \\
  
\begin{tabular}{@{}l@{}}%
    \text{Joe Thomas}\\
    \text{Department of Mathematical Sciences,}\\
    \text{Durham University,}\\
    \text{Lower Mountjoy, DH1 3LE Durham,}\\
    \text{United Kingdom}\\
    \texttt{joe.thomas@durham.ac.uk}
  \end{tabular}

\end{document}